\theoremstyle{plain}
\newtheorem{thm}{Theorem}
\newtheorem{prop}[thm]{Proposition}
\newtheorem{lem}[thm]{Lemma}
\numberwithin{equation}{section}
\newtheorem{cor}[thm]{Corollary}
\theoremstyle{remark}
\newtheorem{rek}[thm]{Remark}
\theoremstyle{definition}
\newtheorem{defi}[thm]{Definition}
\DeclareMathOperator*{\hocolim}{hocolim }
\DeclareMathOperator{\Sym}{Sym}
 \tikzset{commutative diagrams/.cd,
mysymbol/.style={start anchor=center,end anchor=center,draw=none}
}
\newcommand\MySymb[2][\bigstar]{%
\arrow[mysymbol]{#2}[description]{#1}}
\def\@xfootnote[#1]{%
  \protected@xdef\@thefnmark{#1}%
  \@footnotemark\@footnotetext}
\title{A proof of the Dold--Thom theorem via factorization homology}
\author{Lauren Bandklayder }\thanks{\noindent This work was completed with the partial support of an NSF Graduate Research Fellowship.}
\begin{document}

\newpage
\maketitle

\begin{abstract}
We give a new proof of the classical Dold--Thom theorem by using factorization homology.  Our method is new, quick, and more direct, avoiding the Eilenberg--Steenrod axioms entirely and, in particular, making no use of the theory of quasi-fibrations.
\end{abstract}

\section{Introduction}
The Dold--Thom theorem is a classical result giving a beautiful relation between homotopy and homology. It states that for a nice, based topological space, $M$, there is an isomorphism between the homotopy groups of the infinite symmetric product of $M$ and the homology groups of $M$ itself: $$\pi_*(\mathrm{Sym}(M;A)) \cong \widetilde{H}_*(M;A).$$

 A fundamental result with applications in both algebraic topology and algebraic geometry, this theorem has received much attention since it was first published in 1958. In 1959, Spanier used the equivalence of homology theories exhibited to understand Spanier-Whitehead duality \cite{Spanier}.  Later, McCord gave a convenient model for the categorical tensor of a based space and a topological abelian group which generalized Dold and Thom's infinite symmetric product \cite{MCMC}. Inspired in part by McCord, the notes of Floyd and Floyd \cite{Floyd} discuss the infinite symmetric as a source of models for spectra.  Both McCord's model and the notes \cite{Floyd} are treated in the paper of Kuhn, \cite{Kuhn}, where more historical perspective is also given. Segal, too, gave a generalized model for the infinite symmetric product in \cite{Segal}, viewing it as a labelled configuration space.  In 1996, Gajer gave an intersection-homology variant of the Dold--Thom theorem \cite{Gaj}.  An equivariant formulation of the theorem was given by dos Santos in \cite{DS}, which generalizes an equivariant integral-coefficient formulation given by Lima-Filho in \cite{Lima}.  More recently, Suslin and Voevodsky used the theorem to define motivic cohomology, giving a means translating techniques and results of algebraic topology into algebraic geometry \cite{Voe}.  As giving a complete history of the Dold--Thom theorem would constitute a paper of its own, we have only outlined some of the highlights here, but we hope this convinces the reader of its rich history and utility.

The original proof of the Dold--Thom theorem proceeds by verifying that the composition of functors $\pi_*(\Sym(-;A))$ satisfies the Eilenberg--Steenrod axioms for a reduced homology theory \cite{ES} and agrees with singular homology when applied to $S^0$. The crux of this, and most other known proofs, is to check a certain map is a \emph{quasi-fibration} \cite{DTqf}. This is a bit of a technical digression, and it is our goal in this note to present a more direct proof which does not require any such fact, nor does it appeal to the Eilenberg--Steenrod axioms at all. For simplicity we will restrict ourselves to the case where $M$ is a smooth manifold, but all results carry over with only a slight modification to the case where $M$ is a topological manifold.

The rough outline of our proof is as follows. The heart lies in Proposition \ref{main}, where we use a hypercover argument to see that for $\mathrm{Disk}_{*/M}$ an appropriate category of disks disjointly embedded in $M$, there is a homotopy equivalence between the infinite symmetric product of $M$ and the factorization homology of $M$: $$\mathrm{Sym}(M;A) \simeq \underset{U \in \mathrm{Disk}_{*/M}}{\hocolim  } (\mathrm{Sym}(U; A)).$$ 
\noindent In Lemma \ref{john} we will show how to use the Dold--Kan correspondence to pass to chain complexes. It is crucial to this step that we take the homotopy colimit over a \emph{sifted} category, and we wish to emphasize this point to the reader, as it is precisely this fact that allows us to analyze this same homotopy colimit in spaces as one in chain complexes. Once in the context of chain complexes, we finish the proof by showing the chain complex $\underset{U \in \mathrm{Disk}_{*/M}}{\mathrm{hocolim}} (\Sym(U;A))$ is quasi-isomorphic to the reduced singular chain complex, $\widetilde{C}_*(M;A)$, which precisely implies the desired Dold--Thom isomorphisms: $\pi_i(\Sym(M;A) \cong \widetilde{H}_i(M;A)$.

\subsection*{Acknowledgements} It is an honor to first and foremost thank my advisor, John Francis, for suggesting this approach, as well as for his patience, guidance, and support. I am also filled with gratitude towards Ben Knudsen and Dylan Wilson, both for very many helpful conversations on this work and for commentary on an earlier draft. I would like thank Paul VanKoughnett and Jeremy Mann for reading an earlier draft and offering useful feedback, and Elden Elmanto for helpful conversations, and for his enthusiasm and encouragement throughout. I am grateful to Nick Kuhn for pointing me to the references \cite{Floyd, Kuhn} and for enlightening comments on an earlier draft. Finally, this paper was written while partially supported by an NSF Graduate Research Fellowship, and I am very grateful for their support. 
 
\vspace{5mm}
\section{Preliminaries}
\vspace{3mm}

\subsection{The infinite symmetric product}
In this section we briefly recall the definition of the infinite symmetric product associated to a topological space, motivated by the idea of defining the free topological $A$-module on a space, $M$.
\newpage
\begin{defi} \label{symdef} For $M$ a pointed space with basepoint, $*$, and $A$ an abelian group with identity, $e$, define the \textbf{infinite symmetric product of M with coefficients in A} to be the space $$\mathrm{Sym}(M;A) := \{  (S,l) :  *\in S\subset M,\ |S|<\infty,\text{ and } l:(S\backslash*) \rightarrow A  \} / \sim $$ 
where $\sim$ is the equivalence relation given by $(S,l) \sim (S \cup \{x\}, l')$ when $l'$ is the map which agrees with $l$ on $S$ and sends $x$ to $e$, topologized with the finest topology making the following maps continuous for any finite set $I$:
\begin{align}
f_{I}: M^{I_+} \times A^I &\rightarrow \mathrm{Sym}(M;A) \nonumber \\
(c: I_+ \rightarrow M, l: I \rightarrow A) &\mapsto \Big[(c(I_+), l': s \mapsto \sum_{i \in c^{-1}(s)} l(i))\Big] . \nonumber
\end{align}
Here $M^{I_+}$ denotes based maps $I_+ \rightarrow M$, $A^I$ denotes all maps $I \rightarrow A$, and both are endowed with the product topology. 
\end{defi}

That is, taking the union over all finite $I$ of the maps $f_I$ gives a surjection \\$M^{I_+}\times A^I \twoheadrightarrow \mathrm{Sym}(M;A)$, and we say $U \subset \mathrm{Sym}(M;A)$ is open if and only if its inverse image is open under each of these maps. In particular, this topology requires that labels vanish at the basepoint and that points labeled by the identity be forgotten, and allows for points to collide whence their labels add.  

\vspace{3mm}

\begin{lem} \label{finite} For $I$ a finite set, there is a homeomorphism $$\mathrm{Sym}(I_+;A) \cong \widetilde{H}_0(I_+;A)= A^I$$ where we consider $A$ as a topological space with the discrete topology and $A^I$ with the product topology.
\end{lem}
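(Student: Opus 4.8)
The plan is to write the homeomorphism down by hand, using \emph{canonical representatives}. Since $I_+ \setminus \{*\} = I$ is finite, every equivalence class in $\mathrm{Sym}(I_+;A)$ contains exactly one representative of the form $(I_+, l)$ with $l\colon I \to A$: starting from an arbitrary representative $(S, l_0)$, one extends $l_0$ by the identity $e$ over the points of $I \setminus S$ to obtain such an $l$. Because each generating relation $(T, m) \sim (T \cup \{x\}, m')$ merely adds or deletes a point carrying the label $e$, this function $I \to A$ depends only on the class, so $[(S, l_0)] \mapsto l$ is a well-defined bijection $\Phi\colon \mathrm{Sym}(I_+;A) \to A^I$; tracing through the definition, $\Phi^{-1}$ is the restriction of the structure map $f_I$ along $\{\mathrm{id}_{I_+}\}\times A^I \hookrightarrow (I_+)^{I_+}\times A^I$.

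It then suffices to check that $\Phi$ is a homeomorphism, and here the point is that both sides are discrete. On the right, $A$ is discrete by hypothesis and $I$ is finite, so $A^I$ with the product topology is discrete. On the left, $\mathrm{Sym}(I_+;A)$ carries by definition the finest topology making all the defining structure maps $f_J\colon (I_+)^{J_+}\times A^J \to \mathrm{Sym}(I_+;A)$ continuous, as $J$ ranges over finite sets; but each such source is the product of the finite (discrete) set of based maps $J_+\to I_+$ with the discrete space $A^J$, hence is discrete, and the final topology with respect to a family of maps out of discrete spaces is discrete. A bijection of discrete spaces is a homeomorphism, so $\mathrm{Sym}(I_+;A)\cong A^I$.

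To finish, I would identify $A^I$ with $\widetilde{H}_0(I_+;A)$ via the standard computation: $I_+$ is discrete, so $H_0(I_+;A) = A^{I_+}$ with one summand per point, and the reduced group is the cokernel of the basepoint inclusion $A \to A^{I_+}$, i.e. the summands indexed by $I$. I do not expect a genuine obstacle anywhere; the one step deserving a moment's care is the well-definedness and bijectivity of $\Phi$ --- equivalently, that $\sim$ never identifies two distinct labelings of all of $I_+$ --- which is exactly the observation that every generating move preserves the ``extend by $e$'' function. Once that is in place the topological content is automatic, since everything in sight is discrete.
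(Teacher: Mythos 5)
Your proposal is correct and follows essentially the same route as the paper: the map given by extending the labeling by $e$ over the unlabeled points is exactly the paper's $f$, your $\Phi^{-1}$ is its $g$, and both arguments reduce the topological content to the observation that both spaces are discrete. If anything, you are slightly more careful than the paper in justifying \emph{why} $\mathrm{Sym}(I_+;A)$ is discrete (the final topology with respect to maps out of discrete spaces is discrete), a point the paper simply asserts.
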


\begin{proof}
Let $I_+$ equal $\{+,1,...,n\}$ and let $e$ denote the identity of $A$.  We can define the following map, $f$, from $\mathrm{Sym}(I_+;A)$ to $A^I$:
\begin{align}
f: \mathrm{Sym}(I_+;A) &\rightarrow A^I \nonumber \\
\big[ (S,l) \big] &\mapsto (l_1(1),l_2(2),...,l_n(n))\nonumber
\end{align}
where for each $i$ in $I$, we define $l_i$ as follows:
$$l_i(i):= \begin{cases}
l(i) & i \in S \\
e & else.
\end{cases}$$ 
This is well defined:
suppose $\big[ (S,l) \big] =\big[(S',l') \big]$ so without loss of generality we can assume that $S'$ is equal to $S\amalg j$, and that $l'$ is identical to $l$ on $S$, and sends $j$ to $e$. Note that these assumptions imply:
\begin{enumerate}
\item{for each $i$ in $S \subset S'$, we have the equalities $l'_i(i)=l'(i)=l(i)=l_i(i),$ }
\item{for each $i$ not in $S'$, $i$ is also not in $S$ and so we have the equalities $l'_i(i)=e=l_i(i),$}
\item{since $j$ is in $S'$ but not $S$, we have the equalities $l'_j(j)=l'(j)=e=l_j(j).$ }
\end{enumerate}
These observations prove that $f(\big[ (S,l) \big])$ is equal to $f(\big[ (S',l') \big])$ and it follows that $f$ is well-defined. That $f$ is continuous follows from the fact that since $I$ is a finite set, the topology on $\mathrm{Sym}(I_+;A)$ coincides with the discrete topology. Finally, we can define an inverse to $f$:
\begin{align}
g: A^I &\rightarrow \mathrm{Sym}(I_+;A) \nonumber \\
(a_1,...,a_n) &\mapsto \big[ (I_+,l: i \mapsto a_i) \big]. \nonumber
\end{align}
That $g$ is well defined is clear, and that it's continuous follows, as before, from the fact that $A^I$ is given the discrete topology. It is straightforward to see that $g \circ f$ is the identity on $\mathrm{Sym}(I_+;A)$, and $f \circ g$ the identity on $A^I$, proving the claim. 

\end{proof}

\vspace{3mm}

\begin{rek} \label{htpyinvar} A pointed map $M \xrightarrow{f} Y$ induces a map $\mathrm{Sym}(M;A) \xrightarrow{\mathrm{Sym}(f;A)} \mathrm{Sym}(Y;A)$ which is given explicitly by the assignment:
$$ \Big[ (S,l) \Big] \mapsto  \Big[ (f(S), x\mapsto \sum_{s \in f^{-1}(x)}l(s)) \Big] \nonumber $$
and, in fact, this allows us to view the infinite symmetric product as a functor $$\mathrm{Sym}(-;A): \mathrm{Top}_* \rightarrow \mathrm{Top}_*.$$ In the future we will use the notation $\Sym(f;A)\big[(S,l)\big]=\big[f(S),f(l)\big]$. Moreover, this functor is homotopy invariant. Let $H: M\times [0,1]\rightarrow Y$ be a homotopy between two pointed maps $f,g: M \rightarrow Y$ and for each element $t \in [0,1]$, let $H_t$ denote the map $H(-,t):M \rightarrow Y$, then one can define a homotopy between $\mathrm{Sym}(f;A)$ and $\mathrm{Sym}(g;A)$ explicitly as 
\begin{align} H': \mathrm{Sym}(M;A) \times [0,1] &\rightarrow \mathrm{Sym}(Y;A) \nonumber \\
((S,l),t) &\mapsto \mathrm{Sym}(H_t;A)(S,l). \nonumber 
\end{align}
\end{rek}

\vspace{3mm}
\subsection{The relevant categories}
From here on we will fix $M$ to be a smooth, pointed $n$-manifold. 
The following categories will come up throughout our proof:

\begin{defi} Let $\mathrm{Mfld}_n$ denote the category whose objects are $n$-manifolds and with morphisms given by open embeddings.
\end{defi} 
\begin{defi}
Let $\mathrm{Disk} \subset \mathrm{Mfld}_n$ denote the full subcategory consisting of objects which are finite disjoint unions of $n$-dimensional Euclidean spaces. 
\end{defi}
\noindent Of course, the category $\mathrm{Disk}$ also depends on $n$, but since we have fixed a dimension $n$ we will omit that from the notation for ease of reading. We can then consider the over-category $\mathrm{Disk}_{/M}$.

\begin{defi} Let $\mathrm{Disk}_{*/M}$ denote the full subcategory of $\mathrm{Disk}_{/M}$ consisting of objects $U \hookrightarrow M$ whose image contains the basepoint $* \in M$. 
\end{defi}

\begin{defi} Let $\mathrm{Disk}_{*/M}^{\mathrm{\leq 2}}$ denote the full subcategory of $\mathrm{Disk}_{*/M}$ consisting of objects $U \hookrightarrow M$ such that $U$ has at most two connected components. 
\end{defi}

\vspace{3mm}
\section{The Proof}
\vspace{3mm}

\begin{prop} \label{main} There is a homotopy equivalence 
$$\mathrm{Sym}(M;A) \simeq \underset{U \hookrightarrow M \in \mathrm{Disk}_{*/M} }{\mathrm{hocolim}} \mathrm{Sym}(U;A). $$
\end{prop}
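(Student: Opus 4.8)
The plan is to realize $\mathrm{Sym}(M;A)$ as a homotopy colimit over $\mathrm{Disk}_{*/M}$ by a hypercover/descent argument. First I would note that $\mathrm{Sym}(-;A)$ commutes with filtered colimits of open inclusions in an appropriate sense, and more importantly that it takes a suitable class of open covers to homotopy colimit diagrams; the key local-to-global input is that the infinite symmetric product of a space can be reconstructed from the symmetric products of the opens in a (good, i.e. closed under finite intersections) cover together with their overlaps. Concretely, I would fix a countable good cover of $M$ by opens each diffeomorphic to $\mathbb{R}^n$ (with the basepoint chart distinguished), and argue that $\mathrm{Sym}(M;A)$ is the homotopy colimit of $\mathrm{Sym}(-;A)$ over the poset of finite unions of elements of this cover — this is where one uses that a point-configuration with labels is supported in a finite set, hence lands in a finite union of charts, so the natural map from the (homotopy) colimit is a bijection, and checking it is a homeomorphism uses the explicit description of the topology on $\mathrm{Sym}$ via the maps $f_I$ from Definition \ref{symdef}.

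The second main step is to pass from this particular cover-indexed homotopy colimit to the homotopy colimit over all of $\mathrm{Disk}_{*/M}$. For this I would show the inclusion of the relevant sub-poset (or a suitable cofinal subcategory built from the cover) into $\mathrm{Disk}_{*/M}$ is homotopy cofinal, using Quillen's Theorem A: given any object $U = U_1 \amalg \cdots \amalg U_k \hookrightarrow M$ with $* \in U_1$, the comma category of cover-elements (or finite unions thereof) receiving a map from $U$ and still embedding in $M$ should be shown to be contractible, by a standard argument shrinking each $U_j$ inside a chart of the cover and using that the space of such shrinkings is convex, hence the nerve is filtered-like. I would also invoke that $\mathrm{Disk}_{*/M}$ is sifted (as emphasized in the introduction and used later in Lemma \ref{john}), which lets one replace the homotopy colimit over the cover poset by the one over $\mathrm{Disk}_{*/M}$ without changing the homotopy type.

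The third ingredient is homotopy invariance and the computation on objects: every object of $\mathrm{Disk}_{*/M}$ is, as a space, a based wedge-like union $\mathbb{R}^n \amalg (\text{disks})$, and $\mathrm{Sym}(-;A)$ is homotopy invariant by Remark \ref{htpyinvar}, so on objects the diagram is equivalent to the corresponding diagram of discrete approximations — but for the \emph{statement} of Proposition \ref{main} I only need the homotopy equivalence, so I can keep the $\mathrm{Sym}(U;A)$'s as they are and feed the cofinality plus cover-descent together. Assembling: $\mathrm{Sym}(M;A) \simeq \mathrm{hocolim}_{\text{cover poset}} \mathrm{Sym}(U;A) \simeq \mathrm{hocolim}_{\mathrm{Disk}_{*/M}} \mathrm{Sym}(U;A)$.

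The main obstacle I anticipate is the descent step — showing that $\mathrm{Sym}(-;A)$ genuinely sends the chosen (finite-union) cover to a homotopy colimit diagram, i.e. that the canonical map $\mathrm{hocolim} \to \mathrm{Sym}(M;A)$ is a weak equivalence and not merely a continuous bijection. This requires controlling the topology carefully (the quotient/colimit topology on $\mathrm{Sym}$ from Definition \ref{symdef}) and verifying the relevant Mayer–Vietoris-type gluing, for instance by checking that on each $f_I$-stratum the cover pulls back to an open cover of $M^{I_+}\times A^I$ and that homotopy colimits commute appropriately; this is the technical heart, and it is precisely the point where the classical proof would invoke quasi-fibrations, so replacing that with a clean hypercover argument is the crux.
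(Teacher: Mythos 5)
Your overall local-to-global strategy is the right one, but the proposal has a genuine gap at exactly the point you flag as the crux, and the missing idea is specific. The paper does not try to show that $\mathrm{Sym}(-;A)$ sends open covers \emph{of $M$} to homotopy colimit diagrams. Instead it first proves that $\mathrm{Sym}(-;A)$ carries open embeddings to open embeddings (a page of point-set work with the maps $f_I$ from Definition \ref{symdef}, checking both openness and injectivity of $\Sym(g;A)$). Consequently $\{\mathrm{Sym}(U;A)\}_{U \in \mathrm{Disk}_{*/M}}$ is literally a diagram of open subspaces of $\mathrm{Sym}(M;A)$, indexed by $\mathrm{Disk}_{*/M}$ itself, and one applies Lurie's generalized Seifert--van Kampen theorem (Theorem \ref{LurieSVK}) directly to the space $\mathrm{Sym}(M;A)$: the canonical map from the homotopy colimit is an equivalence provided that, for each point $x$ of $\mathrm{Sym}(M;A)$, the category of objects whose image contains $x$ has contractible nerve. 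Since such a point is a finite labelled configuration, that category is cofiltered (inside any two given opens choose disjoint disk neighborhoods of the finitely many configuration points), hence contractible. This single observation discharges the ``descent'' obligation you leave open; your proposed substitute (checking the gluing stratum by stratum on $M^{I_+}\times A^I$ and commuting homotopy colimits) is not carried out, and it is precisely where quasi-fibration-style difficulties would reappear.

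A secondary problem: your intermediate poset of finite unions of chart elements consists of open sets that are generally not disjoint unions of Euclidean spaces, so it is not a subcategory of $\mathrm{Disk}_{*/M}$, and the cofinality comparison via Quillen's Theorem A as you describe it does not typecheck without additional mediation. The paper sidesteps this by using $\mathrm{Disk}_{*/M}$ itself as the indexing category of the open cover from the start. Note also that siftedness of $\mathrm{Disk}_{*/M}$ plays no role in Proposition \ref{main}; it enters only later, in Lemma \ref{john}.
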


\vspace{3mm} 
Before proving this proposition, we recall the following theorem which can be found in greater generality in [Lur16--A.3] :
\begin{thm}[Lurie] \label{LurieSVK} Let $X$ be a topological space and let $\mathscr{U}(X)$ denote the poset of open subsets of $X$. Let $\mathscr{C}$ be a small category and let $ \chi: \mathscr{C} \rightarrow \mathscr{U}(X)$ be a functor. For every $x \in X$ let $\mathscr{C}_x$ be the full subcategory of $\mathscr{C}$ spanned by those objects $C \in \mathscr{C}$ such that $x$ is in the image $\chi(C)$, and suppose the following condition holds: 
\begin{itemize}
\item{For all $x \in X$, the classifying space $\mathrm{B}\mathscr{C}_x$ is contractible.}
\end{itemize}
Then the canonical map $\underset{C \in \mathscr{C}}{\mathrm{hocolim  }} (\chi(C)) \rightarrow X$ is an equivalence. \end{thm}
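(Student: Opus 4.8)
\emph{Step 1 (reduction to simplicial sets).} Geometric realization is left adjoint to $\mathrm{Sing}$, hence preserves homotopy colimits, and the counit $|\mathrm{Sing}\,Y|\to Y$ is always a weak equivalence. So it suffices to prove that the natural map $\underset{C\in\mathscr{C}}{\mathrm{hocolim}}\ \mathrm{Sing}(\chi(C))\to\mathrm{Sing}(X)$ is a weak equivalence of simplicial sets. I would model its source by the Bousfield--Kan bar construction: it is the diagonal of the bisimplicial set $B$ with $B_{p,q}=\coprod_{C_0\to\cdots\to C_p}\mathrm{Sing}_q(\chi(C_0))$, the comparison map forgetting the chain and applying $\mathrm{Sing}$ to the inclusions $\chi(C_0)\hookrightarrow X$.

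\emph{Step 2 (small simplices).} Since every $\mathrm{B}\mathscr{C}_x$ is contractible, hence nonempty, the family $\mathscr{U}:=\{\chi(C)\}_{C\in\mathscr{C}}$ is an open cover of $X$. Let $\mathrm{Sing}^{\mathscr{U}}(X)\subseteq\mathrm{Sing}(X)$ be the simplicial subset of simplices whose image is contained in a member of $\mathscr{U}$. Every simplex occurring in $B$ is small, so the comparison map factors as $\underset{C}{\mathrm{hocolim}}\ \mathrm{Sing}(\chi(C))\to\mathrm{Sing}^{\mathscr{U}}(X)\hookrightarrow\mathrm{Sing}(X)$, and the inclusion on the right is a weak equivalence by the classical ``small simplices suffice'' argument (iterated barycentric subdivision together with the Lebesgue number lemma pushes any finite subcomplex of $\mathrm{Sing}(X)$ into $\mathrm{Sing}^{\mathscr{U}}(X)$, which is enough since spheres and disks are compact). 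It remains to show that the first map is a weak equivalence.

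\emph{Step 3 (identifying the fibres).} By the realization lemma for bisimplicial sets (a levelwise weak equivalence induces a weak equivalence on diagonals), it is enough to show, for each fixed $q$, that the map of simplicial sets in the variable $p$, $B_{\bullet,q}\to\mathrm{Sing}^{\mathscr{U}}_q(X)$ (target constant in $p$, hence discrete), is a weak equivalence. A $p$-simplex of $B_{\bullet,q}$ lying over a given $\sigma\in\mathrm{Sing}^{\mathscr{U}}_q(X)$ is exactly a chain $C_0\to\cdots\to C_p$ with $\mathrm{im}(\sigma)\subseteq\chi(C_0)$ (the lift of $\sigma$ to $\chi(C_0)$ is unique because $\chi(C_0)\hookrightarrow X$ is injective); and since $\chi$ takes values in the \emph{poset} of open subsets, the condition $\mathrm{im}(\sigma)\subseteq\chi(C)$ is inherited by $C_1,\dots,C_p$. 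So the fibre over $\sigma$ is the nerve of the full subcategory $\mathscr{C}_\sigma\subseteq\mathscr{C}$ on those $C$ with $\mathrm{im}(\sigma)\subseteq\chi(C)$, giving $B_{\bullet,q}\cong\coprod_{\sigma\in\mathrm{Sing}^{\mathscr{U}}_q(X)}\mathrm{B}\mathscr{C}_\sigma$ compatibly with the projection to $\mathrm{Sing}^{\mathscr{U}}_q(X)$. The claim thus reduces to: $\mathrm{B}\mathscr{C}_\sigma$ is contractible for every small singular simplex $\sigma$.

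\emph{Step 4 (the main obstacle).} Since $\mathscr{C}_\sigma=\bigcap_{x\in\mathrm{im}(\sigma)}\mathscr{C}_x$, this does \emph{not} follow formally from the hypothesis: an intersection of full subcategories each with contractible nerve can have non-contractible nerve. This is the heart of the theorem, and where I expect essentially all the effort to lie. The plan would be to make simplices small with respect to a fixed basis $\mathscr{V}$ of $X$ that is closed under finite intersections (geodesically convex opens, in the manifold case of interest), replace $\mathscr{C}$ by a suitable category built from pairs $(C,V)$ with $V\in\mathscr{V}$ and $V\subseteq\chi(C)$ — for which the previous steps apply and the hypothesis is inherited, the projection to $\mathscr{C}_x$ having fibres the downward-directed posets $\{V\in\mathscr{V}:x\in V\subseteq\chi(C)\}$ — and in that refined situation propagate contractibility from a point $x\in\mathrm{im}(\sigma)$ to $\mathrm{B}\mathscr{C}_\sigma$ via Quillen's Theorem A, exploiting that $\mathrm{im}(\sigma)$, being the image of the contractible space $\Delta^q$, can be contracted inside any basic open meeting it; the original statement is then recovered from the refined one by a cofinality comparison. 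Granting this, Steps 1--3 assemble to give that $\underset{C\in\mathscr{C}}{\mathrm{hocolim}}\ \chi(C)\to X$ is an equivalence; this delicate point, in much greater generality, is what is carried out in [Lur16--A.3].
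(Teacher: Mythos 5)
The paper does not actually prove this statement: it is recalled from [Lur16--A.3] and used as a black box, so there is no internal argument to compare yours against. Judged on its own terms, your Steps 1--3 are correct and standard: the Bousfield--Kan bisimplicial model for the homotopy colimit, the small-simplices theorem applied to the open cover $\{\chi(C)\}_{C\in\mathscr{C}}$ (which does cover $X$, since contractibility of $\mathrm{B}\mathscr{C}_x$ forces $\mathscr{C}_x\neq\emptyset$), and the realization lemma together reduce the theorem to the assertion that $\mathrm{B}\mathscr{C}_\sigma$ is weakly contractible for every singular simplex $\sigma$ whose image lies in some $\chi(C)$, where $\mathscr{C}_\sigma=\bigcap_{x\in\mathrm{im}(\sigma)}\mathscr{C}_x$. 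Your identification of the fibres of $B_{\bullet,q}\to\mathrm{Sing}^{\mathscr{U}}_q(X)$ is also right, including the observation that membership in $\mathscr{C}_\sigma$ propagates up a chain because $\chi$ lands in a poset of subsets.

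The gap is Step 4, which you yourself flag as the entire mathematical content of the theorem: it is a plan, not a proof, and as stated it does not go through. Passing from contractibility of the categories $\mathscr{C}_x$ attached to points to contractibility of $\mathscr{C}_\sigma$ attached to simplices is precisely the hard point, and your proposed application of Quillen's Theorem A is not set up --- you would need the relevant comma categories over the inclusion $\mathscr{C}_\sigma\hookrightarrow\mathscr{C}_x$ (or over your refined category of pairs $(C,V)$) to have contractible nerves, and nothing in the sketch establishes this; ``contracting $\mathrm{im}(\sigma)$ inside a basic open'' is a statement about subspaces of $X$, not about the indexing categories whose nerves must be controlled. Moreover, the fallback to a basis of geodesically convex opens is unavailable in the generality required: the theorem is stated for an arbitrary topological space, and the paper's main use of it (Proposition \ref{main}) takes $X=\Sym(M;A)$, which is not a manifold, so a convexity-based refinement would not even cover the application at hand. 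The complete arguments in the literature have to work substantially harder at exactly this point (via further subdivision and an induction organized differently from ``each fibre $\mathrm{B}\mathscr{C}_\sigma$ is contractible''); since you explicitly defer that step back to [Lur16--A.3], the proposal establishes only the (correct) reduction, not the theorem.
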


\begin{proof}[Proof of Proposition \ref{main}]
Observe first that the functor $$\Sym(-;A): \mathrm{Top}_* \rightarrow \mathrm{Top}_*$$ preserves open embeddings. 

To see it this, suppose $g:X \rightarrow Y$ is an open embedding. We'll first show $\Sym(g):=\Sym(g;A): \Sym(X;A) \rightarrow \Sym(Y;A)$ is open. Note that for any finite set $I$, we have the commutative diagram 
\[
\begin{tikzcd}
X^{I_+}\times A^I \ar{r}{f_{I,X}} \ar[swap]{d}{g^I \times {\rm{id}}^I} & \Sym(X;A) \ar{d}{\Sym(g)} \\
Y^{I_+}\times A^I \ar[swap]{r}{f_{I,Y}} & \Sym(Y;A) 
\end{tikzcd}
\]
where $f_{I,X}$ and $f_{I,Y}$ are the maps topologizing $X$ and $Y,$ respectively. For any open $U \subset \Sym(X;A)$, we want to see that $\Sym(g)(U)$ is open in $\Sym(Y;A)$. We know by definition that $f_{I,X}^{-1}(U)$ is open, and because $g$ is an open embedding, $\bigl( g^I \times {\rm{id}}^I \bigr) \circ \bigl( f_{I,X}^{-1}(U) \bigr)$ is also open. Now 

$$\Sym(g)(U)=\underset{I\text{--finite}}{\bigcup} f_{I,Y} \big( (g^I \times \mathrm{id}^I) \circ (f_{I,X}^{-1}(U)) \big)$$ 
\noindent so that $\Sym(g)(U)$ is open as desired.



Now to see $\Sym(g)$ is an embedding, it suffices to see it is an injection. For this, assume the equivalence class $\big[(g(S),g(l))\big]$ is the same as $\big[(g(T),g(m))\big]$, then we will see it follows that $\big[(S,l)\big]$ is the same equivalence class as $\big[(T,m)\big]$. 

From our assumption of the equivalence $\big[(g(S),g(l))\big]=\big[(g(T),g(m))\big]$, we can write without loss of generality the following equalities 
\begin{align}
&g(S)=g(T)\cup g(S')  \\
&g(l)|_{g(T)}=g(m) \\ 
&g(l)|_{g(S')}\equiv e. 
\end{align}
The first line, with the fact that $g$ is an embedding, implies that $T$ is a subset of $S$, so we have the equality $S=T \amalg (S-T).$ Then we will be done if we show that $l|_{T}=m$ and that $l|_{S-T}\equiv e,$ because then the equalities
 $\big[(S,l)\big]=\big[(T \amalg (S-T),m \amalg e)\big]=\big[(T,m)\big]$ will immediately follow. Let us first consider $l|_{T}$, the case of $l|_{S-T}$ will be similar. We know by assumption (3.2) that $g(l)|_{g(T)}=g(m)$ which implies that that for $t$ an element of $T$, we have the string of equalities
 \begin{align}
 m(t)&=\sum_{x \in g^{-1}(g(t))}m(x) \nonumber \\
 &=g(m)(g(t)) \nonumber \\
 &=g(l)(g(t)) \nonumber \\
 &=\sum_{x \in g^{-1}(g(t))}l(x) \nonumber \\
 &=l(t) \nonumber
 \end{align}
 where the first and last line come from the fact that $g$ is an embedding, the second and fourth lines are by definition, and the third line is our assumption (3.2). This shows $l|_T=m$ as desired. To see the equivalence $l|_{S-T} \equiv e$, note by our assumption (3.3), as above we have the following equalities for $s$ an element of $S-T$,
 \begin{align}
 e &= g(l)(g(s))\nonumber \\
 &=\sum_{x \in g^{-1}(g(s))}l(x)  \nonumber \\
 &=l(s) \nonumber
\end{align}
which proves the claim, and further concludes the proof that $\Sym(-;A)$ preserves open embeddings. 

The above discussion then allows us to consider the functor $$\Sym(-;A): \mathrm{Disk}_{*/M}  \rightarrow \mathscr{U}(\mathrm{Sym}(M;A))$$ which sends an object $\iota: U\hookrightarrow M$ to $\mathrm{im}(\mathrm{Sym}(\iota;A))=\mathrm{Sym}(U;A)$ viewed as a subspace of $\mathrm{Sym}(M;A).$  To apply Theorem \ref{LurieSVK}, it remains to check that for a configuration $x~=~(\{*,x_1,...,x_m\},l)$ in $\mathrm{Sym}(M;A)$, the category $(\mathrm{Disk}_{*/M})_x$ has a contractible classifying space. This will follow from the fact that the nerve of a cofiltered category is contractible. To see that $(\mathrm{Disk}_{*/M})_x$ is cofiltered, suppose $f: U\hookrightarrow M$ and $g: V \hookrightarrow M$ are objects in $(\mathrm{Disk}_{*/M})_x$. The intersection $f(U) \cap g(V) \subset M$ is open and non-empty; in particular, it contains at least the set of points $\{*,x_1,...,x_m\}$. It follows by local contractibility that there is a disjoint union of open contractible neighborhoods, one around each element of $\{*,x_1,...,x_m\}$, including into both $f(U)$ and $g(V)$, and which can be realized as an embedding $\amalg_{i=1}^m \mathbb{R}^n \hookrightarrow M$ giving the cofiltration condition. 

We can now apply Theorem \ref{LurieSVK} to see there is an equivalence
$$\underset{U \hookrightarrow M\in \mathrm{Disk}_{*/M}}{\mathrm{hocolim}} \mathrm{Sym}(U;A) \xrightarrow{\simeq} \mathrm{Sym}(M;A).$$

\end{proof}

\begin{rek} In fact, the above homotopy colimit is equivalent to the factorization homology of $M$ with coefficients in the $n$-disk algebra in spaces (as in [AF15-3.2]) determined by the functor $\mathrm{Sym}(-;A)$, which motivates the title of this note.
\end{rek}
\vspace{3mm}

\begin{cor} \label{endtop} There are homotopy equivalences of spaces
$$\mathrm{Sym}(M;A) \simeq \hocolim \Bigl( \mathrm{Disk}_{*/M}\overset{\Sym(-;A)}{\longrightarrow} \mathrm{Top} \Bigr) \simeq \hocolim \Bigl( \mathrm{Disk}_{*/M}\overset{\widetilde{H}_0(-;A)}{\longrightarrow} \mathrm{Top} \Bigr) $$ 
where, for $U \hookrightarrow M$ in $\mathrm{Disk}_{*/M}$, we again consider $\widetilde{H}_0(U;A) \cong A^{|\pi_0(U)|-1}$ as a discretely topologized space. 
\end{cor}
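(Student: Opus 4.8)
The first equivalence is nothing but Proposition~\ref{main}. Indeed, the proof given there applies Theorem~\ref{LurieSVK} with $\mathscr{C}=\mathrm{Disk}_{*/M}$ and $\chi=\Sym(-;A)$, and since each $\Sym(\iota;A)$ is an open embedding (as established in that proof), the diagram $U\mapsto\Sym(U;A)$ valued in open subsets of $\Sym(M;A)$ is naturally isomorphic to the composite $\mathrm{Disk}_{*/M}\xrightarrow{\Sym(-;A)}\mathrm{Top}$; hence the homotopy colimit appearing in Proposition~\ref{main} is precisely the middle term of the present statement. For the second equivalence the plan is to exhibit a natural transformation between the two functors $\mathrm{Disk}_{*/M}\to\mathrm{Top}$ in the statement which is a levelwise homotopy equivalence, and then to invoke the fact that a levelwise homotopy equivalence of diagrams induces a homotopy equivalence on homotopy colimits.

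To build this natural transformation, I would first note that collapsing each connected component of an object $U\hookrightarrow M$ of $\mathrm{Disk}_{*/M}$ to a point defines a continuous pointed map $q_U\colon U\to\pi_0(U)$, where $\pi_0(U)$ carries the discrete topology and is based at the component containing $*$. Since $X\mapsto\pi_0(X)$ (with discrete topology) is functorial for continuous maps, the $q_U$ assemble into a natural transformation of functors $\mathrm{Disk}_{*/M}\to\mathrm{Top}_*$ from the underlying-space functor $U\mapsto U$ to $U\mapsto\pi_0(U)$. Applying the functor $\Sym(-;A)$ of Remark~\ref{htpyinvar} produces a natural transformation $\Sym(q_U;A)\colon\Sym(U;A)\to\Sym(\pi_0(U);A)$. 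Composing with the homeomorphism $\Sym(\pi_0(U);A)\cong\widetilde{H}_0(\pi_0(U);A)=\widetilde{H}_0(U;A)\cong A^{|\pi_0(U)|-1}$ of Lemma~\ref{finite} then yields the desired natural transformation $\Sym(-;A)\Rightarrow\widetilde{H}_0(-;A)$ of functors $\mathrm{Disk}_{*/M}\to\mathrm{Top}$.

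It remains to check that each component of this transformation is a homotopy equivalence. For fixed $U\hookrightarrow M$ in $\mathrm{Disk}_{*/M}$, every component of $U$ is a Euclidean space, hence contractible; choosing a contraction of the basepoint component onto $*$ and of each remaining component onto a chosen point of it exhibits $q_U$ as a pointed homotopy equivalence, with homotopy inverse the evident section. By the homotopy invariance of $\Sym(-;A)$ recorded in Remark~\ref{htpyinvar}, $\Sym(q_U;A)$ is then a homotopy equivalence, and since the map of Lemma~\ref{finite} is a homeomorphism the full composite $\Sym(U;A)\to\widetilde{H}_0(U;A)$ is a homotopy equivalence. Passing to homotopy colimits over $\mathrm{Disk}_{*/M}$ now gives the second equivalence.

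The step I expect to require the most care is the naturality of the identification in Lemma~\ref{finite}. A morphism of $\mathrm{Disk}_{*/M}$ may send two disjoint disks of $U$ into a single component of the target, so on $\pi_0$ one obtains arbitrary pointed maps of finite pointed sets, not merely injections; one must check that under the homeomorphism $\Sym(I_+;A)\cong A^{I}$ of Lemma~\ref{finite} such a map $I_+\to J_+$ corresponds to the map $A^{I}\to A^{J}$ that sums labels along fibers, i.e.\ to $\widetilde{H}_0$ of the underlying map of finite sets. This is a direct unwinding of the definitions, but it is the only point that is not either formal or already proved above. A secondary bookkeeping point is simply to confirm that the homotopy-colimit invariance is being applied to diagrams of unbased spaces indexed by $\mathrm{Disk}_{*/M}$, which is the setting in which the corollary is phrased.
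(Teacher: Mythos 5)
Your proposal is correct and follows the same route as the paper: the first equivalence is Proposition \ref{main}, and the second comes from the componentwise equivalence $\Sym(U;A)\simeq\Sym(\pi_0(U);A)\cong A^{|\pi_0(U)|-1}$ via Lemma \ref{finite} and the homotopy invariance of Remark \ref{htpyinvar}. You spell out the naturality of the identification in Lemma \ref{finite} (summing labels along fibers of the induced map on $\pi_0$), a point the paper's two-sentence proof leaves implicit, but the argument is the same.
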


\begin{proof}
For $I$ a finite set, we've seen in Lemma \ref{finite} the homeomorphism $\mathrm{Sym}(I_+;A) \cong A^{I}$. Thus, since for any $U \hookrightarrow M$ in $\mathrm{Disk}_{*/M}$ there is a homotopy equivalence $U \simeq \pi_0(U)$, the corollary follows from the above proposition combined with the homotopy invariance [Remark \ref{htpyinvar}] of $\mathrm{Sym}(-;A)$.
\end{proof}

\vspace{3mm}
Now we wish to compute this homotopy colimit in the category of connective chain complexes. Observe that the functor $\widetilde{H}_0(-;A): \mathrm{Disk}_{*/M} \rightarrow \mathrm{Top}_*$ factors through connective chain complexes in the following fashion:
\[
\begin{tikzcd}
\mathrm{Disk}_{*/M} \ar{rr} \ar[swap]{dr}{\widetilde{H}_0(-;A)[0]; \text{ } d=0} & &\mathrm{Top} \\
&\mathrm{Ch}_{\geq0} \underset{\mathrm{DK}}{ \simeq} \mathrm{sAb} \ar[swap]{ur}{U}&
\end{tikzcd}
\]
where $\mathrm{DK}: \mathrm{Ch}_{\geq0} \rightarrow \mathrm{sAb}$ is part of the Dold--Kan correspondence, $U: \mathrm{sAb} \rightarrow \mathrm{Top}$ is the forgetful functor. We will henceforth simply use $\widetilde{H}_0(-;A)$ to denote the functor $\widetilde{H}_0(-;A)[0]: \mathrm{Disk}_{*/M} \rightarrow \mathrm{Ch}_{\geq 0}.$\\


\vspace{3mm}
\begin{lem} \label{john}
The natural map $$\hocolim \Bigl( \mathrm{Disk}_{*/M}\overset{\widetilde{H}_0(-;A)}{\longrightarrow} \mathrm{Ch}_{\geq 0} \Bigr) \xrightarrow{\simeq} \hocolim \Bigl( \mathrm{Disk}_{*/M}\overset{\widetilde{H}_0(-;A)}{\longrightarrow} \mathrm{Top} \Bigr) $$ is an equivalence.
\end{lem}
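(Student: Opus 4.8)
The plan is to recognize the displayed map as the canonical comparison map for a homotopy colimit along a forgetful functor, and then to reduce the lemma to a preservation property of that functor. Write $V$ for the forgetful functor $\mathrm{Ch}_{\geq 0}\xrightarrow{\,\mathrm{DK}\,}\mathrm{sAb}\to\mathrm{sSet}\xrightarrow{\,|\,\cdot\,|\,}\mathrm{Top}$ (so $V$ of a degree-zero complex is the discrete space on its underlying set, consistently with the convention fixed just before the lemma). For any diagram $F\colon\mathscr{C}\to\mathrm{Ch}_{\geq 0}$ there is a natural comparison map relating $V$ of its homotopy colimit to the homotopy colimit of $V\circ F$, and the assertion of the lemma is exactly that this map is an equivalence for $\mathscr{C}=\mathrm{Disk}_{*/M}$ and $F=\widetilde{H}_0(-;A)$. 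Thus it suffices to show that $V$ preserves the homotopy colimit $\hocolim_{\mathrm{Disk}_{*/M}}\widetilde{H}_0(-;A)$.

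I would deduce this from two ingredients. First, $\mathrm{Disk}_{*/M}$ is sifted --- the fact emphasized in the introduction; as in the proof of Proposition \ref{main} one invokes local contractibility of $M$ to check that the diagonal $\mathrm{Disk}_{*/M}\to\mathrm{Disk}_{*/M}\times\mathrm{Disk}_{*/M}$ is cofinal. Second, $V$ preserves sifted homotopy colimits. For the second point one may present a homotopy colimit via the bar (simplicial replacement) construction and use that the forgetful functor $\mathrm{sAb}\to\mathrm{sSet}$ commutes with filtered colimits, reflexive coequalizers, and geometric realizations of simplicial objects, these being instances of its commuting with sifted colimits since $\mathrm{sAb}$ is the category of algebras for the monad $\mathbb{Z}[-]$ on $\mathrm{sSet}$ and $\mathbb{Z}[-]$ preserves sifted colimits; some care with cofibrant replacement is needed in the model-categorical version, or one can pass to $\infty$-categories, identify $\mathrm{Ch}_{\geq 0}$ with connective $H\mathbb{Z}$-modules, and use the standard fact that the forgetful functor to spaces (namely $\Omega^\infty$ composed with the forgetful functor to connective spectra) preserves sifted colimits. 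Granting both ingredients, $V$ preserves the homotopy colimit in question and the lemma follows.

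I expect the main obstacle to be the preservation statement for $V$, and in particular the single way in which $V$ fails to be colimit-preserving on the nose: it does not carry the coproduct $\bigoplus$ in $\mathrm{sAb}$ to the coproduct $\amalg$ in $\mathrm{Top}$. In simplicial degree $k$ the bar replacement of $\widetilde{H}_0(-;A)$ formed in $\mathrm{Top}$ is $\coprod_{c_0\to\cdots\to c_k}\widetilde{H}_0(c_0;A)$, indexed by the $k$-chains in $\mathrm{Disk}_{*/M}$, whereas applying $V$ to the bar replacement formed in $\mathrm{Ch}_{\geq 0}$ gives the underlying space of $\bigoplus_{c_0\to\cdots\to c_k}\widetilde{H}_0(c_0;A)$; these disagree levelwise. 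Siftedness of $\mathrm{Disk}_{*/M}$ is precisely what forces the induced map of realizations to be a weak equivalence regardless: the extra degeneracies of the bar construction, available because the diagonal is cofinal, absorb the difference between $\amalg$ and $\bigoplus$ upon realization. Making this comparison precise --- or, equivalently, invoking the cited preservation result for connective $H\mathbb{Z}$-modules --- completes the proof.
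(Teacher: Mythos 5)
Your second ingredient --- that the forgetful functor from connective chain complexes (equivalently, simplicial abelian groups, or connective $H\mathbb{Z}$-modules) to spaces preserves sifted homotopy colimits --- is correct and is exactly what the paper uses; your bar-construction discussion of $\amalg$ versus $\bigoplus$ is a fair account of why siftedness is the relevant hypothesis. The gap is in your first ingredient: the discrete category $\mathrm{Disk}_{*/M}$ is \emph{not} sifted, and the argument you point to from Proposition \ref{main} does not establish that it is. In Proposition \ref{main} one checks that the categories $(\mathrm{Disk}_{*/M})_x$ are \emph{cofiltered}, i.e.\ that two disk neighborhoods of a fixed finite configuration admit a common \emph{refinement}; local contractibility lets you shrink downward. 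Siftedness requires the opposite: for every pair of objects $U\hookrightarrow M$, $V\hookrightarrow M$ the category of objects $W\hookrightarrow M$ equipped with morphisms from both must have contractible nerve. Since all structure maps to $M$ are embeddings, a morphism $(U\hookrightarrow M)\to(W\hookrightarrow M)$ exists (and is then unique) precisely when the image of $U$ is contained in the image of $W$, so you would need a disjoint union of disks in $M$ containing the \emph{union} of the images of $U$ and $V$. This can simply fail to exist: take $M=S^1$ and let $U$, $V$ be two overlapping arcs whose union is all of $S^1$; no disjoint union of open intervals in $S^1$ contains both, so the relevant comma category is empty. Hence the comparison map for the strict diagram over $\mathrm{Disk}_{*/M}$ cannot be handled by siftedness of that category alone.

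The paper's proof repairs exactly this point. One passes to the $\infty$-category $\mathscr{D}\mathrm{isk}_{*/M}$ obtained by inverting isotopy equivalences in $\mathrm{Disk}_{*/M}$; there, morphism spaces are (roughly) spaces of embeddings compatible with the maps to $M$ only up to isotopy, which gives enough room to isotope $U$ and $V$ into a common disjoint union of small disks, and this $\infty$-category \emph{is} homotopy sifted by [AF15--3.3.2]. One then needs the additional fact that the localization functor $\mathrm{Disk}_{*/M}\to\mathscr{D}\mathrm{isk}_{*/M}$ is final, so that the homotopy colimit of $\widetilde{H}_0(-;A)$ over the strict category agrees with the one over the localized $\infty$-category; only after that transfer does your preservation argument for the forgetful functor apply. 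To complete your proof you would need to add both of these steps (or some substitute for them); as written, the claim that the diagonal of the discrete category $\mathrm{Disk}_{*/M}$ is cofinal is false.
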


\begin{proof}
By inverting isotopy equivalences in $\mathrm{Disk}_{/M}$ and $\mathrm{Disk}_{*/M}$, one obtains $\infty$-categories which we'll denote by $\mathscr{D}\mathrm{isk}_{/M}$ and $\mathscr{D}\mathrm{isk}_{*/M}$, respectively. For details, see [AF15, AFT17]. What is important to us about these $\infty$-categories is that they are homotopy sifted, and the forgetful functor $U$ preserves homotopy sifted homotopy colimits. These categories fit in to the following diagram:

\[
\begin{tikzcd}
\mathscr{D}\mathrm{isk}_{*/M} \ar{r}  &\mathscr{D}\mathrm{isk}_{/M} \\
 \mathrm{Disk}_{*/M} \ar{u} \ar{r} &\mathrm{Disk}_{/M} \ar{u}
\end{tikzcd}
\]

\noindent where the top horizontal functor is full and final and the right vertical functor is a localization in the sense of [Lur09--5.2.7] and hence final [AFT17--2.21]. In this particular case, that is sufficient to imply the left vertical functor is also a localization and hence final, so there is an equivalence $\underset{\mathrm{Disk}_{*/M}}{\hocolim}\ \widetilde{H}_0(-;A) \simeq \underset{\mathscr{D}\mathrm{isk}_{*/M}}{\mathrm{hocolim}}\ \widetilde{H}_0(-;A) $ which, with the siftedness of $\mathscr{D}\mathrm{isk}_{*/M}$ [AF15--3.3.2], proves the lemma.

\end{proof}

\vspace{5mm}

To summarize, we have shown the equivalences 
\begin{align*}
\pi_i(\Sym(M;A)) &\cong \pi_i \bigl(\hocolim \bigl( \mathrm{Disk}_{*/M}\overset{\widetilde{H}_0(-;A)}{\longrightarrow} \mathrm{Top} \bigr) \bigr) \\
& \cong H_i\big(\hocolim \bigl( \mathrm{Disk}_{*/M}\overset{\widetilde{H}_0(-;A)}{\longrightarrow} \mathrm{Ch}_{\geq 0} \bigr) \bigr)
\end{align*}
\noindent where the first equivalence follows from Proposition \ref{main} and Corollary \ref{endtop}, and the second from Lemma \ref{john}. Now all that remains is to see that this does, in fact, agree with the singular reduced homology of $M$.

\vspace{3mm}

\begin{cor}
There is an equivalence of chain complexes $$\Bigl(\hocolim \bigl( \mathrm{Disk}_{*/M}\overset{\widetilde{H}_0(-;A)}{\longrightarrow} \mathrm{Ch}_{\geq 0} \bigr) \Bigr) \simeq \Bigl(\hocolim \bigl( \mathrm{Disk}^{\leq 2}_{*/M}\overset{\widetilde{H}_0(-;A)}{\longrightarrow} \mathrm{Ch}_{\geq 0} \bigr) \Bigr).$$
\end{cor}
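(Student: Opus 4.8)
The strategy is to show that the functor $\widetilde{H}_0(-;A): \mathrm{Disk}_{*/M} \to \mathrm{Ch}_{\geq 0}$ is left Kan extended from its restriction to the full subcategory $\mathrm{Disk}^{\leq 2}_{*/M}$ along the inclusion $\iota$. As in the proof of Lemma \ref{john}, it is harmless to replace the $1$-categories by their $\infty$-categorical localizations $\iota: \mathscr{D}\mathrm{isk}^{\leq 2}_{*/M} \hookrightarrow \mathscr{D}\mathrm{isk}_{*/M}$, the former being the full subcategory spanned by the disk systems with at most two components. Granting that $\widetilde{H}_0(-;A) \simeq \iota_!\bigl(\widetilde{H}_0(-;A)|_{\mathscr{D}\mathrm{isk}^{\leq 2}_{*/M}}\bigr)$, the corollary is formal: for any left Kan extension $\iota_! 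G$, the homotopy colimit of $\iota_! G$ over $\mathscr{D}\mathrm{isk}_{*/M}$ agrees with the homotopy colimit of $G$ over $\mathscr{D}\mathrm{isk}^{\leq 2}_{*/M}$ (since $\hocolim$ is itself left Kan extension along the projection to a point, and Kan extensions compose), so applying $\hocolim_{\mathscr{D}\mathrm{isk}_{*/M}}$ to the displayed equivalence gives precisely the asserted equivalence of chain complexes.

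By the pointwise formula for left Kan extensions, what must be shown is that for each object $U \hookrightarrow M$ of $\mathrm{Disk}_{*/M}$ the canonical map
$$\hocolim_{(V \to U)} \widetilde{H}_0(V;A) \longrightarrow \widetilde{H}_0(U;A),$$
with the homotopy colimit taken over the comma category of at-most-two-component disk systems equipped with a morphism to $U$ over $M$, is an equivalence. Write $U = U_0 \amalg U_1 \amalg \cdots \amalg U_k$ with $* \in U_0$; when $k \leq 1$ there is nothing to prove, so assume $k \geq 2$. The essential point is the additivity of $\widetilde{H}_0(-;A)$: the natural map $\bigoplus_{j=1}^{k} \widetilde{H}_0(U_0 \amalg U_j;A) \to \widetilde{H}_0(U;A)$ is an isomorphism, and each summand is the value of the functor on a two-component object. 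I would establish the displayed equivalence by analyzing the comma category in terms of which components of $U$ the disks of $V$ lie over — using, as in the proof of Proposition \ref{main}, that such comma categories are computed by cofiltered posets of disk neighborhoods: the subcategory of objects whose non-basepoint disk lies over a fixed $U_j$ with $1 \leq j \leq k$ is cofiltered (again by local contractibility) and contributes the $j$-th summand, while the objects consisting of a single disk, or whose second disk lies over $U_0$, span a subcategory on which $\widetilde{H}_0(-;A)$ is either zero or maps to zero and whose homotopy colimit vanishes. Assembling these contributions recovers the direct sum above.

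The step I expect to be the main obstacle is exactly this last analysis: verifying that the comma category genuinely reassembles its homotopy colimit as $\bigoplus_{j=1}^{k} \widetilde{H}_0(U_0 \amalg U_j;A)$, i.e.\ that the "single-pair" subcategories have contractible classifying spaces and that the remaining "mixing" terms contribute nothing. This is a multi-disk refinement of the cofiltered-nerve argument already used in Proposition \ref{main}, and the bookkeeping is where the care is needed. A possibly cleaner route is to factor the inclusion through the filtration $\mathrm{Disk}^{\leq 2}_{*/M} \subset \mathrm{Disk}^{\leq 3}_{*/M} \subset \cdots$ and prove, by the same Kan extension argument but now only having to examine objects $U$ with exactly $p+1$ components, that each inclusion $\mathrm{Disk}^{\leq p}_{*/M} \hookrightarrow \mathrm{Disk}^{\leq p+1}_{*/M}$ with $p \geq 2$ induces an equivalence on $\hocolim$ of $\widetilde{H}_0(-;A)$; one then passes to the filtered colimit over $p$, using $\mathrm{Disk}_{*/M} = \mathrm{colim}_p \mathrm{Disk}^{\leq p}_{*/M}$ and the compatibility of homotopy colimits with filtered colimits of indexing categories.
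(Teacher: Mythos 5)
Your overall architecture coincides with the paper's: reduce to showing that $\widetilde{H}_0(-;A)$ is the homotopy left Kan extension of its restriction along $\iota:\mathrm{Disk}^{\leq 2}_{*/M}\hookrightarrow \mathrm{Disk}_{*/M}$, note that the corollary is then formal because homotopy colimits compose with left Kan extensions, and verify the pointwise formula at each $f\colon U=\amalg_{i=1}^{m}\mathbb{R}^n_i\hookrightarrow M$ using the additivity of $\widetilde{H}_0$. Where you diverge is exactly at the step you flag as the main obstacle, and as written that step has a genuine gap. A homotopy colimit over the comma category $\iota_{/f}$ does not decompose as a direct sum of homotopy colimits over the ``sector'' subcategories you describe, because those subcategories are not disjoint: every single-disk object $V_0\hookrightarrow U$ admits morphisms into objects of every sector, so the comma category is connected and ``assembling the contributions'' is not a valid operation without further argument. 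Likewise, ``the homotopy colimit of the mixing subcategory vanishes'' does not by itself let you discard those objects from the ambient colimit. Your proposed filtration by number of components does not sidestep this either; at each stage $p\to p+1$ you face the same pointwise computation.

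The paper closes this gap with a cleaner device that you may want to adopt. Inside $\iota_{/f}$ it singles out the finite subcategory $\mathscr{C}_{/f}$ with objects the canonical inclusions $\mathbb{R}^n_1\cup\mathbb{R}^n_i\hookrightarrow U$ ($i=1,\dots,m$, with $\mathbb{R}^n_1\cup\mathbb{R}^n_1=\mathbb{R}^n_1$) and with the only non-identity morphisms the $(m-1)$ maps out of the single-disk object. Homotopy finality of $\mathscr{C}_{/f}\hookrightarrow\iota_{/f}$ is checked directly: for each $\star=(V_1\amalg V_i\to U)$ the undercategory $F^{\backslash\star}$ is non-empty with an initial object, hence contractible. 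The homotopy colimit over $\mathscr{C}_{/f}$ is then a wide pushout of the $\widetilde{H}_0(\mathbb{R}^n_1\cup\mathbb{R}^n_i;A)$ under $\widetilde{H}_0(\mathbb{R}^n_1;A)=0$, which is the coproduct $\coprod_{i=2}^{m}\widetilde{H}_0(\mathbb{R}^n_1\cup\mathbb{R}^n_i;A)$, and in $\mathrm{Ch}_{\geq 0}$ this agrees with the product $\widetilde{H}_0(U;A)$. This is the precise sense in which your intuition that the single-disk and basepoint-component terms ``contribute nothing'' is correct: they appear as the (zero) cone point of a wide pushout, not as a discardable summand. To repair your write-up, either verify finality of such a finite subcategory as the paper does, or otherwise justify rigorously how the connected comma category reassembles into the direct sum.
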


\begin{proof}
The corollary follows from the fact that the functor $\widetilde{H}_0(-;A): \mathrm{Disk}_{*/M} \rightarrow \mathrm{Ch}_{\geq 0}$ is the homotopy left Kan extension of its restriction to the subcategory $\mathrm{Disk}_{*/M}^{\leq 2}$. To see this, let $\iota$ denote the inclusion $\mathrm{Disk}_{*/M}^{\leq 2} \xrightarrow{\iota} \mathrm{Disk}_{*/M}$ so there is a natural transformation $$ \mathrm{hLKan}_{\iota}\widetilde{H}_0(-;A) \rightarrow \widetilde{H}_0(-;A) $$ which is given on objects $f: U \hookrightarrow M \in \mathrm{Disk}_{*/M}$ by the canonical maps
$$\mathrm{hLKan}_{\iota}\widetilde{H}_0(-;A)(f)=\underset{ \star \in \iota_{/f}}{\mathrm{hocolim}} \ \widetilde{H}_0(s(\star);A)\rightarrow \widetilde{H}_0(U;A)$$ 
where the homotopy colimit is taken over the over-category $\iota_{/f}$ and $s(\star)$ denotes ``source" of such an diagram $\star$. 

That is, every object in $\iota_{/f}$ is a commutative triangle, $\star$, of the form
\[
\begin{tikzcd}
V  \ar{rr}  \ar{dr} & \MySymb{d} & U \ar{dl}{f} \\
&M &\\
\end{tikzcd}
\]
where $\pi_0(V)$ is at most 2 and we define $s(\star)$ to be $V$.

Now it suffices to check these maps are equivalences on each object. Let $U= \amalg_{i=1}^{m} \mathbb{R}^n_i$ and assume the basepoint of $M$ is contained in $f(\mathbb{R}^n_1)$. Denote by $\mathscr{C}_{/f}$ the subcategory of $\iota_{/f}$ consisting of the $m$ objects of the form
\[
\begin{tikzcd}
\mathbb{R}^n_1 \cup \mathbb{R}^n_{i} \ar[hook]{rr} \ar{dr} & &U \ar{dl}{f}  \\
&M
\end{tikzcd}
\]
where the hooked arrow denotes the canonical inclusion $\mathbb{R}^n_1 \cup \mathbb{R}^n_{i} \hookrightarrow \amalg_1^m \mathbb{R}^n_i$, and with morphisms, only the $(m-1)$ canonical inclusions of the diagram with source $\mathbb{R}_1 \cup \mathbb{R}_1= \mathbb{R}_1$ into any of the other diagrams of the above form. 

In a moment will show the inclusion $F: \mathscr{C}_{/f} \hookrightarrow \iota_{/f}$ is homotopy final, but let us momentarily assume this fact and complete the proof of the corollary, since from here the result is almost immediate. Showing finality allows us to take the homotopy colimit over $\mathscr{C}_{/f}$ instead of $\iota_{/f}$. Now the homotopy colimit $ \underset{ \star \in \mathscr{C}_{/f}}{\mathrm{hocolim}} \ \widetilde{H}_0(s(\star);A)$ is precisely the homotopy colimit of the diagram
$$
\begin{tikzcd}
&\indent \indent \widetilde{H}_0(\mathbb{R}^n_1 \cup \mathbb{R}^n_2) \\
&\indent \indent  \widetilde{H}_0(\mathbb{R}^n_1 \cup \mathbb{R}^n_3) \\
\widetilde{H}_0(\mathbb{R}^n_1) \ar{uur} \ar{ur} \ar{r} \ar{dr} &  \vdots \\
&\indent \indent \widetilde{H}_0(\mathbb{R}^n_1 \cup \mathbb{R}^n_m)
\end{tikzcd}
$$
which is exactly the homotopy coproduct (equivalently, the coproduct) $\coprod_{i=2}^m \widetilde{H}_0(\mathbb{R}_1^n \cup \mathbb{R}^n_i)$.  Finally, since we are working in the category of connective chain complexes, the coproduct is the same as the product so this is equivalent to $\widetilde{H}_0(U;A)$. It follows that the map is an isomorphism as desired:
$$\mathrm{hLKan}_{\iota}\widetilde{H}_0(-;A)(f) \cong \widetilde{H}_0(U;A) \xrightarrow{\sim} \widetilde{H}_0(U;A).$$

Now all that remains to complete the proof of the corollary is to show that the inclusion $F: \mathscr{C}_{/f} \hookrightarrow \iota_{/f}$ is homotopy final. To see this we must show that for any $\star \in \iota_{/f}$ the undercategory $F^{\backslash \star}$ is non-empty and contractible. We can write $\star$ as a diagram:
\[
\begin{tikzcd}
V_1 \amalg V_i \ar{rr}{g} \ar[swap]{dr}{h} & \MySymb{d}  &U \ar{dl}{f}  \\
&M
\end{tikzcd}
\]
where $V_1$ embeds into $\mathbb{R}_1^n$ (so as to hit the basepoint) and $V_i$ embeds into $\mathbb{R}_i^n$ for some $i$. Note we allow for the possibility that $i=1$, or that $V_i$ is empty. 

It's not hard to see that an object of the category $F^{\backslash \star}$ is uniquely determined by an embedding (dashed) making the following diagram commute:
$$
\begin{tikzcd}
 & \mathbb{R}_1 \cup \mathbb{R}_j  \arrow[dd]  \ar[hook]{dr} &  \\
V_1 \cup V_i \arrow[rr, crossing over] \ar[swap]{dr} \ar[swap, dashed]{ur}&   &U \ar{dl}  \\
&M&
\end{tikzcd}
$$
This category is always non-empty, and has an initial object, namely $g: V_1 \cup V_i \hookrightarrow \mathbb{R}^n_1 \cup \mathbb{R}^n_i$ (in fact, this is the only object if $i \neq 1$), and so is contractible, proving the claim and, thus the corollary.
\end{proof}

 \vspace{5mm}
\begin{lem}
The canonical map of chain complexes $$\Bigl(\hocolim \bigl( \mathrm{Disk}^{\leq2}_{*/M}\overset{\widetilde{H}_0(-;A)}{\longrightarrow} \mathrm{Ch}_{\geq 0} \bigr) \Bigr)  \xrightarrow{\simeq} \widetilde{C}_*(M;A)$$ is a quasi-isomorphism. 
\end{lem}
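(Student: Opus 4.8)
The plan is to recognize the source of the map as the reduced singular chains of a homotopy colimit of spaces, and then to identify that homotopy colimit by means of Theorem~\ref{LurieSVK}. To begin, regard each object $U\hookrightarrow M$ of $\mathrm{Disk}^{\leq 2}_{*/M}$ as a based space via the unique preimage of $*\in M$; then the reduced singular chains assemble into a functor $\widetilde{C}_*(-;A)\colon\mathrm{Disk}^{\leq 2}_{*/M}\to\mathrm{Ch}_{\geq 0}$ together with a natural transformation $\widetilde{C}_*(-;A)\to\widetilde{H}_0(-;A)$ given by truncation to $H_0$ in degree $0$. Since every object of $\mathrm{Disk}^{\leq 2}_{*/M}$ is a disjoint union of contractible pieces, $\widetilde{H}_i(U;A)=0$ for $i>0$, so this transformation is an objectwise quasi-isomorphism and therefore induces a quasi-isomorphism on homotopy colimits; under it, the map of the lemma becomes the canonical assembly map $\hocolim_{\mathrm{Disk}^{\leq 2}_{*/M}}\widetilde{C}_*(U;A)\to\widetilde{C}_*(M;A)$ induced by the inclusions $U\hookrightarrow M$.

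Next I would use the standard fact that the reduced singular chains functor $\widetilde{C}_*(-;A)\colon\mathrm{Top}_*\to\mathrm{Ch}_{\geq 0}$ carries homotopy colimits of based spaces to homotopy colimits of chain complexes: up to quasi-isomorphism it is the composite of the singular simplicial set functor (a Quillen equivalence), the reduced free simplicial $A$-module functor (a left adjoint), and Dold--Kan normalization (a Quillen equivalence), each of which preserves homotopy colimits. Hence $\hocolim_{\mathrm{Disk}^{\leq 2}_{*/M}}\widetilde{C}_*(U;A)\simeq\widetilde{C}_*\big(\hocolim_{\mathrm{Disk}^{\leq 2}_{*/M}}U;\,A\big)$, with the inner homotopy colimit taken in based spaces, and the lemma is reduced to showing that this homotopy colimit of the identity diagram $U\mapsto U$ over $\mathrm{Disk}^{\leq 2}_{*/M}$ is $M$ (based at $*$), compatibly with the tautological map to $M$.

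To prove this I would apply Theorem~\ref{LurieSVK} with $X=M$, $\mathscr{C}=\mathrm{Disk}^{\leq 2}_{*/M}$, and $\chi(U\hookrightarrow M)=\mathrm{im}(U)\subseteq M$. For each $x\in M$ the full subcategory $\mathscr{C}_x$ of objects whose image contains $x$ is cofiltered: given two such objects, a common refinement is formed by a small disk around $*$ together with, if $x\neq *$ lies in a different component, a disjoint small disk around $x$ --- this uses precisely that the pair $\{*,x\}$ can be engulfed by a disjoint union of \emph{at most two} disks, so the restriction to $\mathrm{Disk}^{\leq 2}_{*/M}$ does no harm. Thus $\mathrm{B}\mathscr{C}_x$ is contractible, and Theorem~\ref{LurieSVK} gives $\hocolim_{\mathrm{Disk}^{\leq 2}_{*/M}}\mathrm{im}(U)\xrightarrow{\ \simeq\ }M$; as each $U$ is naturally homeomorphic onto $\mathrm{im}(U)$, also $\hocolim_{\mathrm{Disk}^{\leq 2}_{*/M}}U\simeq M$ unbasedly. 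Passing to the based homotopy colimit amounts to collapsing the image of the homotopy colimit of the constant ``basepoint'' subdiagram, i.e.\ the map $\mathrm{B}(\mathrm{Disk}^{\leq 2}_{*/M})\to M$ induced by the basepoint inclusions; but $\mathrm{Disk}^{\leq 2}_{*/M}$ is itself cofiltered, so $\mathrm{B}(\mathrm{Disk}^{\leq 2}_{*/M})$ is contractible, and this map factors through the inclusion of $*$. Collapsing it therefore merely marks $*$ as basepoint, yielding $M$ based at $*$, and one checks that the resulting composite is the canonical map of the statement.

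The genuinely delicate point --- and the main obstacle --- is this last step: making Theorem~\ref{LurieSVK} apply over the restricted category $\mathrm{Disk}^{\leq 2}_{*/M}$ (verifying cofilteredness of $\mathscr{C}_x$ using only two components, which is exactly enough to engulf $\{*,x\}$), and carefully tracking the difference between the unbased and the based homotopy colimits so that the extra factor $\mathrm{B}(\mathrm{Disk}^{\leq 2}_{*/M})$ contributes nothing. Everything else --- the comparison $\widetilde{C}_*(-;A)\simeq\widetilde{H}_0(-;A)$ of diagrams and the homotopy-colimit-preservation of singular chains --- is routine.
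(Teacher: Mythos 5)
Your proposal is correct and follows essentially the same route as the paper: identify $M$ as $\hocolim_{\mathrm{Disk}^{\leq 2}_{*/M}} U$ via Theorem~\ref{LurieSVK}, commute reduced singular chains past the homotopy colimit, and compare $\widetilde{C}_*(U;A)$ with $\widetilde{H}_0(U;A)$ objectwise using that each $U$ is a disjoint union of contractible pieces. You are in fact more explicit than the paper on two points it leaves implicit --- verifying the contractibility hypothesis of Theorem~\ref{LurieSVK} over the restricted category $\mathrm{Disk}^{\leq 2}_{*/M}$, and reconciling the unbased homotopy colimit produced by that theorem with the based one needed for reduced chains.
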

\begin{proof}
Another application of Theorem \ref{LurieSVK} confirms there is an equivalence $M \simeq \underset{ U \hookrightarrow M \in \mathrm{Disk}_{*/M}^{\leq 2}  } {\mathrm{hocolim}} U$, and thus we have the following quasi-isomorphisms:
 \begin{align}
 \widetilde{C}_*(M;A) &=\widetilde{C}_*\Big(\underset{ U \hookrightarrow M \in \mathrm{Disk}_{*/M}^{\leq 2}  }{\mathrm{hocolim}} U;A\Big) \nonumber \\
 &\simeq \underset{ U \hookrightarrow M \in \mathrm{Disk}_{*/M}^{\leq 2}  }{\mathrm{hocolim}}\widetilde{C}_*(U;A) \nonumber \\
 &\simeq \underset{ U \hookrightarrow M \in \mathrm{Disk}_{*/M}^{\leq 2}  }{\mathrm{hocolim}} \widetilde{C}_*(\pi_0(U);A) \nonumber
 \end{align}
where the last equivalence comes from our assumption that $U$ is homotopy equivalent to $\pi_0(U)$.

Now there is an obvious map $$\underset{ U \hookrightarrow M \in \mathrm{Disk}_{*/M}^{\leq 2}}{\mathrm{hocolim}}\widetilde{H}_0(U;A) \rightarrow \underset{ U\hookrightarrow M \in \mathrm{Disk}_{*/M}^{\leq 2}}{\mathrm{hocolim}} \widetilde{C}_*(\pi_0(U);A)$$ which induces an isomorphism on homology, and the lemma follows.
\end{proof}

\vspace{5mm}
Stringing the above together, we arrive immediately at the Dold--Thom theorem:
\begin{thm}[Dold--Thom] \label{DT} For $M$ a smooth pointed manifold and for every integer $i\in \mathbb{Z}$, there is an isomorphism of groups
$$\pi_i(\mathrm{Sym}(M;A)) \cong \widetilde{H}_i(M;A).$$
\end{thm}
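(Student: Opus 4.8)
The plan is to string together the equivalences already established in the excerpt, since the theorem is an immediate corollary once all the intermediate results are in hand. The skeleton of the argument is a chain of isomorphisms:
\begin{align*}
\pi_i(\mathrm{Sym}(M;A)) &\cong \pi_i\Bigl(\hocolim\bigl(\mathrm{Disk}_{*/M}\xrightarrow{\widetilde{H}_0(-;A)}\mathrm{Top}\bigr)\Bigr)\\
&\cong H_i\Bigl(\hocolim\bigl(\mathrm{Disk}_{*/M}\xrightarrow{\widetilde{H}_0(-;A)}\mathrm{Ch}_{\geq 0}\bigr)\Bigr)\\
&\cong H_i\Bigl(\hocolim\bigl(\mathrm{Disk}_{*/M}^{\leq 2}\xrightarrow{\widetilde{H}_0(-;A)}\mathrm{Ch}_{\geq 0}\bigr)\Bigr)\\
&\cong \widetilde{H}_i(M;A).
\end{align*}
First I would invoke Proposition \ref{main} together with Corollary \ref{endtop} to identify $\mathrm{Sym}(M;A)$ up to homotopy equivalence with the homotopy colimit over $\mathrm{Disk}_{*/M}$ of the discretely topologized functor $\widetilde{H}_0(-;A)$, which gives the first isomorphism on homotopy groups. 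Next I would apply Lemma \ref{john}: because the relevant $\infty$-category is homotopy sifted and the forgetful functor $U\colon\mathrm{sAb}\to\mathrm{Top}$ preserves sifted homotopy colimits, the homotopy colimit computed in $\mathrm{Ch}_{\geq 0}$ maps by an equivalence to the one computed in $\mathrm{Top}$; under the Dold--Kan correspondence, taking homotopy groups of the space corresponds to taking homology of the connective chain complex, yielding the second isomorphism.

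Then I would apply the Corollary identifying the homotopy colimit over $\mathrm{Disk}_{*/M}$ with the one over the subcategory $\mathrm{Disk}_{*/M}^{\leq 2}$ (via the homotopy left Kan extension argument), and finally the last Lemma, which produces the quasi-isomorphism from $\hocolim_{\mathrm{Disk}_{*/M}^{\leq 2}}\widetilde{H}_0(-;A)$ to the reduced singular chain complex $\widetilde{C}_*(M;A)$ using a further application of Theorem \ref{LurieSVK} to write $M$ itself as a homotopy colimit of disks with at most two components. Passing to homology in degree $i$ then gives $\widetilde{H}_i(M;A)$. Concatenating the four isomorphisms, which are natural in the sense that each is induced by a canonical map, yields the desired group isomorphism $\pi_i(\mathrm{Sym}(M;A))\cong\widetilde{H}_i(M;A)$ for every $i\in\mathbb{Z}$ (noting that for $i\leq 0$ both sides vanish or reduce to the obvious thing, and for $i=0$ one uses connectivity of $\mathrm{Sym}$).

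Since every ingredient has already been proved, there is no genuine obstacle remaining; the only thing to be careful about is bookkeeping. The subtle point worth double-checking is that the isomorphism on homotopy groups really does match the isomorphism on homology groups under Dold--Kan — i.e. that the equivalence of Lemma \ref{john} is compatible with the standard identification $\pi_i\circ U\circ\mathrm{DK}\cong H_i$ — and that all the maps in the chain are the canonical comparison maps, so that the composite is a well-defined natural transformation and hence a group homomorphism, not merely an abstract isomorphism of groups. Once that compatibility is noted, the proof is simply the observation "combine the above," which is why I would keep it to a few lines.
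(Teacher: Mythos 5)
Your proposal is exactly the paper's argument: the paper itself proves the theorem by the single sentence ``Stringing the above together, we arrive immediately at the Dold--Thom theorem,'' and the chain of isomorphisms you write out is precisely the concatenation the paper intends, in the same order. Your extra remark about the compatibility of the Dold--Kan identification $\pi_i \circ U \circ \mathrm{DK} \cong H_i$ is a reasonable bookkeeping point but does not change the route.
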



\vspace{5mm}

\end{document}